\newtheorem{theorem}{Theorem}
\newtheorem{corollary}{Corollary}
\newtheorem{lemma}{Lemma}
\newenvironment{proof}{\paragraph{Proof:}}{\hfill$\square$}
\begin{document}
\title{\textbf{Fibonacci or Lucas numbers that are products of two Lucas numbers or two Fibonacci numbers}}
\author[,1]{Ahmet Da\c{s}demir\thanks{Corresponding author}}
\author[2]{Ahmet Emin}
\affil[1]{Department of Mathematics, Faculty of Science, Kastamonu University, Kastamonu, Turkey, E-mail: ahmetdasdemir37@gmail.com}
\affil[2]{Department of Mathematics, Faculty of Science, Karabük University, Karabük, Turkey, E-mail: ahmetemin@karabuk.edu.tr}

\date{\vspace{-5ex}}
\maketitle
\begin{abstract} 
This contribution presents all possible solutions to the Diophantine equations $F_k=L_mL_n$ and $L_k=F_mF_n$. To be clear, Fibonacci numbers that are the product of two arbitrary Lucas numbers and Lucas numbers that are the product of two arbitrary Fibonacci numbers are determined herein. The results under consideration are proven by using Dujella-Pethö lemma in coordination with Matveev's theorem. All common terms of the Fibonacci and Lucas numbers are determined. Further, the Lucas-square Fibonacci and Fibonacci-square Lucas numbers are given.

\vskip1em \noindent \textbf{2020 Mathematics Subject Classification:} 11D61, 11J86, 11B39.

\vskip1em \noindent \textbf{Keywords:} Fibonacci number, Lucas number, Diophantine equation, Matveev theorem, Logarithmic height in logarithms, Dujella and Pethö lemma.
\end{abstract}

\section{Introduction}\label{Int}

Let $\left\lbrace  F_n \right\rbrace_{n \geqslant 0} $ and $\left\lbrace  L_n \right\rbrace_{n \geqslant 0} $ be the $n\text{th}$ terms of the Fibonacci and Lucas numbers, which can be produced by utilizing the recurrence relation $F_{n+1} = F_{n} + F_{n-1}$ and $L_{n+1} = L_{n} + L_{n-1}$ for all integers $n \geqslant 2$ with the initial conditions $\left( F_{0} , F_{1} \right) = \left( 0 , 1 \right) $ and $\left( L_{0} , L_{1}\right) = \left( 2 , 1 \right) $, respectively. It is seen that both the Fibonacci and Lucas numbers are a second-order integer sequence satisfying $ {x^2} - x - 1 = 0 $. By considering this algebraic equation with the mentioned initial conditions, one can develop the following Binet's formulas for $\forall n \in \mathbb{N}$:
\begin{equation}\label{I1}
{F_n} = \frac{{{\alpha ^n} - {\beta ^n}}}{{\alpha  - \beta }}\,\,\text{and}\,\,{L_n} = {{\alpha ^n} + {\beta ^n}},
\end{equation} 
where $ \alpha = \frac{{1 + \sqrt 5 }}{2} $ and $ \beta = \frac{{1 - \sqrt 5 }}{2} $. More detail can be referenced to \cite{Vor}, \cite{Vaj}, and \cite{Kos}.

It can be stated immediately that the above integer sequences are the most beloved subject of mathematics and are paid great attention by almost all branches of modern sciences. Today, the respective results and discussions are expanding to an exciting aspect: finding all possible solutions to Diophantine equations, including special integer sequences, i.e., Fibonacci, Pell, or Jacobsthal numbers, etc. In \cite{Mar}, Marques investigated the Fibonacci numbers that can be expressible in terms of the generalized Cullen and Woodall numbers. In \cite{Cha1}, Chaves and Marques determined all terms of $k$-generalized Fibonacci numbers, which are the sum of the powers of the consecutive $k$-generalized Fibonacci sequence. In \cite{Bra}, Bravo and Gómez considered $k$-generalized Fibonacci numbers that are the Mersenne numbers. In \cite{Pong}, Pongsriiam found all the Fibonacci and Lucas numbers, which are one away from the product of an arbitrary number of the Fibonacci or Lucas numbers. In \cite{Dda}, Ddamulira et al. solved the Pillai-type problem with $k$–generalized Fibonacci numbers and powers of 2 for $k > 3$. In \cite{Kaf}, Kafle et al. dealt with finding all solutions to the Pell equations related to the product of two Fibonacci numbers. In \cite{Cha2}, Qu and Zeng investigated all Lucas numbers that are concatenations of two repdigits. In \cite{Sia}, Şiar et al. found all Fibonacci or Lucas numbers that are products of two repdigits in base $b$. In \cite{Ala}, Alan and Alan discovered the Mersenne numbers that can be written in terms of the products of two Pell numbers. In \cite{Rih}, Rihane and Togbé obtained terms of $k$-Fibonacci numbers in the arrays of the Padovan or Perrin numbers.

In the open literature, there are a few more specific studies that study the Diophantine-type equations concerning the Fibonacci numbers or other integer sequences. However, both the above brief literature survey and other source work show that integer sequences in the right-hand side and left-hand side of problems under consideration are of different characteristic algebraic equations. For example, Fibonacci or Lucas number vs. Pell number by Alekseyev \cite{Ale}, Fibonacci number vs. Pell number by Ddamulira et al. \cite{Dda2}, generalized Fibonacci number vs. generalized Pell number by Bravo et al. \cite{Bra2}, Fibonacci number vs. Jacobsthal number by Erduvan and Keskin \cite{Erd}, and Leonardo number vs. Jacobsthal number by Bensella and Behlou \cite{Ben}. Motivated by the results of the current literature, in this paper, we address finding problem of all possible solutions to the following Diophantine equations for positive integers $k$, $m$, and $n$ according to the famous Matveev's theorem and Dujella-Pethö lemma:
\begin{equation}\label{I2}
{F_k} = {L_m} {L_n}
\end{equation}
and 
\begin{equation}\label{I3}
{L_k} = {F_m} {F_n}. 
\end{equation}
Here, due to multiplicative symmetry, it is sufficient that the case where $k \geqslant 1$ and $1 \leqslant m \leqslant n$ is considered. However, our equations consist of the Fibonacci and Lucas numbers that have the same characteristic equation. This makes the application of Dujella-Petho lemma impossible because some parameters disappear unlike the solution processes in the current literature. One of the novelties of the paper is to display a new approach to this issue.

It should be noted that in \cite{Car}, Carlitz considered the same problems the first time by employing divisibility properties and some elementary identities. However, the author's results are either incorrect or incomplete. More precisely, the author asserted that while the unique solution of Equation \eqref{I2} is $\left( k,m,n \right) = \left( 8,4,2\right)$ for $m \geqslant n > 1$, Equation \eqref{I3} has no solution for $m \geqslant n > 2$. Further, in \cite{Wan}, based on the elementary properties and inequalities, Wang et al. stated that while Equation \eqref{I3} has no solution, the triple $\left( k,m,n \right) = \left( 4,2,1 \right)$ is one solution to Equation \eqref{I2}. As can be seen, the results of both studies are also contradictory to each other. The results of our paper will both eliminate this deficiency and will end this debate.

\section{Basic Tools}\label{Bas}

This section introduces essential tools and definitions, lemmas, and theorems required in the rest of the paper. Our proof process is based on Matveev's theorem, which uses the linear forms in logarithms to limit the variables of the problem, and Dujella-Pethö lemma, which allows us to reduce the bounds.

Let $ \eta $ be an algebraic number of degree $ d $ with the minimal polynomial
\begin{equation*}
f\left( X \right): = \sum\limits_{j = 0}^d {{a_j}{x^{d - j}}} = {a_0}{X^d} + {a_1}{X^{d - 1}} +  \cdots  + {a_d} = \prod\limits_{i = 1}^d {\left( {X - {\eta ^{\left( i \right)}}} \right)}  \in Z\left[ X \right],
\end{equation*}
where ${a_0} > 0$,  ${a_i}$'s are relatively prime integers, and $\eta^{\left( i \right)} $ is the $i\text{th}$ conjugate of $\eta$. The logarithmic height, denoted by $h\left( \eta  \right)$, of $ \eta $ is defined by 
\begin{equation*}
h\left( \eta  \right) = \frac{1}{d}\left( {\log \left| {{a_0}} \right| + \sum\limits_{i = 1}^d {\log \left( {\max \left\{ {\left| {{\eta ^{\left( i \right)}}} \right|,1} \right\}} \right)} } \right),
\end{equation*}

Let ${\eta_1},{\eta_2}, \ldots ,{\eta_s}$ be positive algebraic numbers in the real number field $\mathcal{F}$ of degree $D$ and let ${b _1},{b _2}, \ldots ,{b _s}$ be nonzero rational numbers. Introduce the notations
\begin{equation*}
\Lambda : = {\eta _1}^{{b_1}}{\eta _2}^{{b_2}} \ldots {\eta _s}^{{b_s}} - 1{\kern 1pt} {\kern 1pt} {\text{and}}{\kern 1pt} {\kern 1pt} B : = \max \left\{ {\left| {{b_1}} \right|,\left| {{b_2}} \right|, \ldots ,\left| {{b_s}} \right|} \right\}.
\end{equation*}

Let ${A_1},{A_2},\ldots,{A_s}$ be the positive real numbers such as
\begin{equation*}
{A_j} \geqslant \max \left\{ {Dh\left( {{\eta _j}} \right),\left| {\log {\eta _j}} \right|,0.16} \right\}\,\,\text{for all}\,\,j=1,2,\ldots,s. 
\end{equation*}

In this case, we can give the famous Matveev's theorem \cite{Mat} and Dujella-Pethö lemma \cite{Duj}.
\begin{theorem}[Matveev \cite{Mat}]\label{PT1}
The following inequality holds for non-zero $\Lambda$ over real field $\mathcal{F}$: 
\begin{equation*}
\log \left( {\left| \Lambda  \right|} \right) >  - 1.4 \times {30^{s + 3}} \times {s^{4.5}} \times D^2 \times \left( {1 + \log D} \right) \times\left( {1 + \log B} \right) \times{A_1}\times{A_2}\times...\times{A_s}.
\end{equation*}
\end{theorem}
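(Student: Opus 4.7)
The statement is Matveev's celebrated bound for linear forms in logarithms of algebraic numbers, which the authors import as a black box from \cite{Mat}; so strictly speaking there is no proof to anticipate in the present paper, and my plan would be to simply invoke \cite{Mat} and move on. Nevertheless, if I were obliged to prove it from scratch, I would follow the Baker--Matveev auxiliary function strategy from transcendence theory.

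The outline would be as follows. First, assume for contradiction that $|\Lambda|$ is strictly smaller than the right-hand side of the claimed inequality. Second, using Siegel's lemma over a suitable number field containing $\eta_1,\dots,\eta_s$, construct a nonzero polynomial $P(X_1,\dots,X_s)$ with algebraic integer coefficients whose partial degrees and logarithmic height are tightly controlled by the $A_j$, $D$, and $B$, so that the auxiliary function $\Phi(z) = P(\eta_1^z,\dots,\eta_s^z)$ vanishes to high order at a long arithmetic progression of integers. Third, exploit the smallness of $\Lambda$, which forces $\eta_s^{b_s}$ to be very close to $\eta_1^{-b_1}\cdots \eta_{s-1}^{-b_{s-1}}$, to extrapolate many additional zeros of $\Phi$. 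Fourth, a zero estimate, or an interpolation determinant argument in the style of Laurent, produces a nontrivial algebraic number that is simultaneously extremely small and of bounded height; the resulting height contradiction yields the claimed lower bound on $|\Lambda|$.

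The main obstacle, and Matveev's chief contribution over Baker--W\"ustholz, is obtaining the explicit constants: the polynomial factor $s^{4.5}$ combined with the geometric factor $30^{s+3}$ in $s$, and the $D^2(1+\log D)$ dependence on the degree, rather than the much cruder bounds produced by a naive Baker-style argument. Achieving this level of sharpness requires very delicate bookkeeping of local heights at every stage and, crucially, the use of an interpolation determinant together with Kummer-theoretic multiplicity estimates in place of the classical Thue--Siegel extrapolation. For the purposes of the present paper, none of this machinery is reproduced: Theorem~\ref{PT1} is cited and used as a ready-made tool in the forthcoming analysis of equations \eqref{I2} and \eqref{I3}.
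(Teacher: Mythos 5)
Your treatment matches the paper exactly: Theorem~\ref{PT1} is stated as an imported result from \cite{Mat} with no proof given, and citing it as a black box is precisely what the authors do. Your sketch of the Baker--Matveev auxiliary-function strategy is a reasonable outline of the actual proof in the literature, but it is not needed here and the paper does not attempt it.
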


\begin{lemma}[Dujella and Pethö \cite{Duj}]\label{PT3}
Let $M$ be a positive integer, $p/q$ be a convergent of the continued fraction of the irrational $\tau$ such that $q>6M$, and let $A$, $B$, $\tau$ be positive rational numbers with $A>0$ and $B>1$. Let $\varepsilon  = \left\| {\mu q} \right\| - M\left\| {\tau q} \right\|$, where $\left\|  \cdot  \right\|$ is the distance from the nearest integer. If $\varepsilon > 0$, then there is no integer solution $\left(m,n,k\right) $ of inequality 
\begin{equation*}
0 < m\tau  - n + \mu  < A{B^{ - k}}
\end{equation*}
with
\begin{equation*}
m \leqslant M\,\,\text{and}\,\,k \geqslant \frac{{\log \left( {Aq/\varepsilon } \right)}}{{\log B}}.
\end{equation*}
\end{lemma}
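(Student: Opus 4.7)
The plan is to multiply the target inequality by $q$ and exploit the near-integer behaviour of $\tau q$ that follows from $p/q$ being a continued-fraction convergent of $\tau$. The threshold on $k$ will turn the upper bound $AB^{-k}$ into something smaller than the gap $\varepsilon$, while the convergent property will produce a matching lower bound from the other direction, generating a contradiction.

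First I would multiply the inequality $0 < m\tau - n + \mu < AB^{-k}$ through by $q$, obtaining
\begin{equation*}
0 < V < AqB^{-k} \leqslant \varepsilon, \qquad V := m\tau q - nq + \mu q,
\end{equation*}
where the final step uses the hypothesis $k \geqslant \log(Aq/\varepsilon)/\log B$. Since $\varepsilon \leqslant \|\mu q\| \leqslant 1/2$, the quantity $V$ is a positive real number strictly less than $1/2$, so its nearest integer is $0$ and $\|V\| = V$.

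Next I would decompose $V = (mp - nq) + m(\tau q - p) + \mu q$, observing that $mp - nq \in \mathbb{Z}$, so that $\|V\| = \|m(\tau q - p) + \mu q\|$. The standard estimate $|\tau - p/q| < 1/q^{2}$ for a convergent yields $|\tau q - p| < 1/q < 1/(6M) < 1/2$, which forces $p$ to be the nearest integer to $\tau q$ and hence $|\tau q - p| = \|\tau q\|$. Combining with the reverse triangle inequality $\|a+b\| \geqslant \|b\| - |a|$ on $\mathbb{R}/\mathbb{Z}$ and with $m \leqslant M$, I would obtain
\begin{equation*}
\|V\| \geqslant \|\mu q\| - m\,\|\tau q\| \geqslant \|\mu q\| - M\,\|\tau q\| = \varepsilon.
\end{equation*}

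The two conclusions $V = \|V\| \geqslant \varepsilon$ and $V < \varepsilon$ are incompatible, so no integer solution $(m,n,k)$ can exist. The main obstacle in executing this plan is the careful sign and residual bookkeeping in the middle step: the hypothesis $q > 6M$ is precisely what guarantees that $p$ is the nearest integer to $\tau q$ and that the absolute values appearing in the reverse triangle inequality translate faithfully into distance-to-nearest-integer values, so that the chain of inequalities producing $\|V\| \geqslant \varepsilon$ is tight enough to clinch the contradiction.
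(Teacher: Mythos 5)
The paper states this lemma without proof, simply citing Dujella and Pethö, and your argument is exactly the standard proof from that source: multiply by $q$, peel off the integer $mp-nq$, use $|\tau q-p|=\|\tau q\|$ for a convergent, and play $\|V\|\geqslant\varepsilon$ against $V<\varepsilon$. The only caveat is inherited from the statement itself rather than from your proof: the step $|m(\tau q-p)|\leqslant M\|\tau q\|$ tacitly uses $0\leqslant m\leqslant M$, so the lemma should really be read with $m$ a positive integer (as it is in the paper's applications), not merely $m\leqslant M$.
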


The following lemmas will be used later.

\begin{lemma}\label{PT4}
Let $n$ be non-negative integer. Then, 
\begin{equation}\label{P1}
{\alpha^{n - 2}} \leqslant {F_n} \leqslant {\alpha^{n - 1}},
\end{equation}
\begin{equation}\label{P2}
{\alpha^{n - 1}} \leqslant {L_n} \leqslant 2{\alpha^{n}},
\end{equation}
\begin{equation}\label{P3}
{\left| \beta  \right|^{ - \left( n - 2 \right) }} \leqslant {F_n} \leqslant {\left| \beta  \right|^{ - \left( n - 1\right) }},
\end{equation}
and
\begin{equation}\label{P4}
{\left| \beta  \right|^{ - \left( n - 1 \right) }} \leqslant {L_n} \leqslant {\left| \beta  \right|^{ - \left( n + 1\right) }}.
\end{equation}
\end{lemma}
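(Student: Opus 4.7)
My plan is to prove all four estimates by induction on $n$, using only the characteristic identity $\alpha^2=\alpha+1$ (equivalently $\alpha^{k+2}=\alpha^{k+1}+\alpha^k$) together with the recurrences $F_{n+1}=F_n+F_{n-1}$ and $L_{n+1}=L_n+L_{n-1}$. Before starting, I would record the algebraic simplification $|\beta|=\alpha^{-1}$, which follows at once from $\alpha\beta=-1$; consequently $|\beta|^{-k}=\alpha^k$, so inequality (P3) is merely a rewriting of (P1), and (P4) is the statement $\alpha^{n-1}\leq L_n\leq\alpha^{n+1}$, which actually sharpens the upper bound in (P2). Hence only three substantive inequalities remain to prove: (P1), (P2), and the bound $L_n\leq\alpha^{n+1}$.

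For each of these I would first dispose of the base cases $n=1$ and $n=2$ by direct numerical verification using $F_1=F_2=1$, $L_1=1$, $L_2=3$, and $\alpha=(1+\sqrt{5})/2\approx 1.618$ (so $\alpha^2\approx 2.618$, $\alpha^3\approx 4.236$). For the inductive step, assuming the bound holds for the indices $n-1$ and $n$, the computation is mechanical. For example, the upper bound in (P1) propagates via
\[
F_{n+1}=F_n+F_{n-1}\leq\alpha^{n-1}+\alpha^{n-2}=\alpha^{n-2}(\alpha+1)=\alpha^{n-2}\cdot\alpha^2=\alpha^n,
\]
and in parallel the lower bound yields
\[
F_{n+1}=F_n+F_{n-1}\geq\alpha^{n-2}+\alpha^{n-3}=\alpha^{n-3}(\alpha+1)=\alpha^{n-1}.
\]

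The Lucas bounds (P2) and the sharpened (P4) upper bound are handled by the same pattern: the factor $2$ in (P2) propagates unchanged through the induction, and the bound $L_n\leq\alpha^{n+1}$ for (P4) closes because $\alpha^{n+1}+\alpha^n=\alpha^n(\alpha+1)=\alpha^{n+2}$. There is no genuine obstacle; this is essentially a textbook exercise driven entirely by the recurrence and the defining relation of $\alpha$. The only subtle point worth flagging is that the estimates cannot hold uniformly at $n=0$ (since for instance $L_0=2>\alpha$), so the induction should be launched from $n=1$, with ``non-negative integer'' interpreted as ``$n\geq 1$''.
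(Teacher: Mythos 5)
Your proof is correct and follows the same route as the paper, which simply states that the lemma ``can be made by using the induction method on $n$'' without giving details; your use of $|\beta|=\alpha^{-1}$ to collapse \eqref{P3} and \eqref{P4} into \eqref{P1} and a sharpened \eqref{P2} is a clean way to organize that induction. Your caveat about $n=0$ is also well taken: as stated the lemma fails there (e.g.\ $F_0=0<\alpha^{-2}$ and $L_0=2>|\beta|^{-1}$), so the bounds should indeed be asserted only for $n\geqslant 1$, which is all the paper ever uses.
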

\begin{proof}
The proof can be made by using the induction method on $n$.
\end{proof}

\begin{lemma}[Ddamulira et al. \cite{Dda2}]\label{PT5}
For all $x \in \left( { - \frac{1}{2},\frac{1}{2}} \right)$, $\left| x \right| < 2\left| {{e^x} - 1} \right|$ is satisfied.
\end{lemma}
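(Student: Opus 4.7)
The plan is to peel off the absolute values by splitting into the two sub-cases $x>0$ and $x<0$; at $x=0$ both sides of the asserted inequality collapse to zero, so that value must be read as implicitly excluded by the strict inequality. In each sub-case I would rephrase the claim as the positivity of a smooth one-variable function on a subinterval of $(-\tfrac12,\tfrac12)$ and then verify that positivity by a monotonicity argument seeded by the common boundary value at $x=0$.

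For $x\in(0,\tfrac12)$ the inequality reads $x<2(e^x-1)$, i.e.\ $f(x):=2e^x-x-2>0$. A direct computation gives $f(0)=0$ and $f'(x)=2e^x-1>0$ on $(0,\tfrac12)$, so $f$ is strictly increasing there and hence $f(x)>f(0)=0$, as required.

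For $x\in(-\tfrac12,0)$ the inequality reads $-x<2(1-e^x)$, i.e.\ $g(x):=2-2e^x+x>0$. Again $g(0)=0$, while $g'(x)=1-2e^x<0$ on $(-\tfrac12,0)$, because $e^x>e^{-1/2}>\tfrac12$ on this interval. Thus $g$ is strictly decreasing on $(-\tfrac12,0)$ and so $g(x)>g(0)=0$, finishing the argument.

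There is no genuine obstacle here; the one mild subtlety is that the hypothesis $|x|<\tfrac12$ is used precisely to guarantee $e^x>\tfrac12$ in the negative case, which is what drives the sign of $g'$. The same proof would in fact push through for $|x|<\log 2$, but beyond that the monotonicity of $g$ breaks down and a more careful analysis would be needed. Since the applications in the rest of the paper only ever invoke the lemma for arguments that are much smaller than $\tfrac12$, the stated range is more than sufficient.
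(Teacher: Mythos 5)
Your proof is correct. Note that the paper itself offers no argument for this lemma at all --- it is quoted as a known result from Ddamulira et al.\ \cite{Dda2} --- so there is no in-paper proof to compare against; your elementary two-case calculus argument (reducing each sign case to the positivity of $f(x)=2e^x-x-2$ resp.\ $g(x)=2-2e^x+x$ and settling it by monotonicity from the common root at $x=0$) is a perfectly good self-contained substitute. Your handling of the degenerate point $x=0$, where the stated strict inequality literally fails because both sides vanish, is the right reading of the lemma as it is used in the paper (it is only ever applied to nonzero quantities $\Gamma_i$), and your observation that the hypothesis $|x|<\tfrac12$ is only needed in the negative case, where it guarantees $e^x>\tfrac12$ and hence $g'<0$, is accurate.
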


\section{Main Results}\label{Mai}

In this section, we will present all solutions to Equations \eqref{I2} and \eqref{I3} and will prove our results.

\begin{theorem}\label{T1}
Let $ k $, $ m $, and $ n $ be a non-zero integer. Then, 
\begin{itemize}
\item Equation \eqref{I2} is satisfied only for the triples of 
\begin{equation}\label{M1}
\left( {k,m,n} \right) \in \left\lbrace  \left( {1,1,1} \right), \left( {2,1,1} \right), \left( {4,1,2} \right),\left( {8,2,4} \right) \right\rbrace. 
\end{equation}

\item Equation \eqref{I3} holds only for the triples of  
\begin{equation}\label{M2}
\left( {k,m,n} \right) \in \left\lbrace \left( {1,1,1} \right),\left( {1,1,2} \right),\left( {1,2,2} \right),\left( {2,1,4} \right),\left( {2,2,4} \right),\left( {3,3,3} \right)\right\rbrace . 
\end{equation}
\end{itemize}
\end{theorem}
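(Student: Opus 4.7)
The plan is to attack both Diophantine equations \eqref{I2} and \eqref{I3} by the same three-stage pattern: transform the equation via Binet's formulas \eqref{I1} into one or more linear forms in logarithms of $\alpha$ and $\sqrt 5$, apply Matveev's theorem (Theorem~\ref{PT1}) to bound the unknowns polynomially in $\log k$, reduce those bounds by a suitable variant of Lemma~\ref{PT3}, and conclude by a direct computer search over the resulting finite range. As a preliminary step I would exhaust small cases (say $1\le k \le 1000$) by enumeration, thereby confirming the solution sets \eqref{M1} and \eqref{M2} in that range; the remainder of the argument assumes $k$, and hence $n$ (via Lemma~\ref{PT4}), are large.

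For \eqref{I2} one expands
\begin{equation*}
\alpha^k - \beta^k = \sqrt 5\,\bigl(\alpha^{m+n} + \alpha^m\beta^n + \alpha^n\beta^m + \beta^{m+n}\bigr),
\end{equation*}
isolates $\alpha^k - \sqrt 5\,\alpha^{m+n}$, and divides by $\sqrt 5\,\alpha^{m+n}$. Since $|\beta|=\alpha^{-1}$ and $1\le m\le n$, the dominant error is $\sqrt 5\,(-1)^m\alpha^{n-m}$, giving
\begin{equation*}
\Gamma_1 := 5^{-1/2}\alpha^{k-m-n} - 1, \qquad |\Gamma_1| \le C\,\alpha^{-2m}.
\end{equation*}
Matveev with $s=2$, $(\eta_1,\eta_2)=(\alpha,\sqrt 5)$ and $(b_1,b_2)=(k-m-n,-1)$ yields $\log|\Gamma_1|\ge -C_1\log k$, whence $m = O(\log k)$. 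Next, absorbing the next-largest term produces
\begin{equation*}
\Gamma_2 := 5^{-1/2}\alpha^{k-m-n}\bigl(1+(-1)^m\alpha^{-2m}\bigr)^{-1} - 1, \qquad |\Gamma_2| \le C'\,\alpha^{-2n},
\end{equation*}
and a three-term Matveev application (with $\eta_3 = 1+(-1)^m\alpha^{-2m}$) bounds $n$, and hence $k$ via Lemma~\ref{PT4}, by an effective expression that iterates to an absolute constant $K_0$. Equation \eqref{I3} is handled in parallel, using $\sqrt 5\,F_n = \alpha^n - \beta^n$ and absorbing the sign flip through analogous identities.

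The main obstacle, flagged explicitly by the authors in the introduction, is the reduction step: the linear form hidden inside $\Gamma_1$ involves only a single integer unknown $k-m-n$, so Lemma~\ref{PT3} does not apply to it in its standard two-variable shape. The workaround I would pursue is to observe that the leading estimate already forces $k-m-n\in\{1,2\}$ once $m$ is moderately large, then fix this value and feed the \emph{second} form, which genuinely contains two independent integer variables, into Lemma~\ref{PT3}. Lemma~\ref{PT5} converts the multiplicative smallness of $\Gamma_2$ into an additive inequality of shape $|(n-m)\tau - \text{integer} + \mu|<AB^{-m}$, with $\tau$ an appropriate ratio of $\log\alpha$ and $\log\sqrt 5$ and $\mu$ depending on the fixed value of $k-m-n$ and the parity of $m$. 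Choosing a convergent of the continued-fraction expansion of $\tau$ so that $\varepsilon>0$ in each of the finitely many parity and sign branches reduces the bounds to a small range, after which a final machine check finishes the proof. The real effort will lie in bookkeeping the parity cases, and in verifying that the third algebraic number $1+(-1)^m\alpha^{-2m}$ keeps a manageable logarithmic height when it degenerates for the smallest values of $m$, which must then be peeled off and handled by the explicit enumeration from the preliminary step.
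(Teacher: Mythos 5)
Your proposal follows the same high-level template as the paper's proof: Binet's formulas, two successive applications of Matveev's theorem (first to bound $m$, then, with an $m$-dependent algebraic number carrying height $O(m)$ as a third parameter, to bound $n$ and hence $k$), a reduction step, and a terminal machine search over the surviving range. Where you genuinely diverge is in the setup of the linear forms. You use $|\beta|=\alpha^{-1}$ to collapse everything into a single power of $\alpha$, obtaining forms such as $5^{-1/2}\alpha^{k-m-n}-1$ with one integer exponent; the paper instead keeps $\alpha$ and $|\beta|$ as two formally distinct algebraic numbers $\eta_1,\eta_2$ with exponents $-k$ and $n+m$ (respectively $n$), precisely so that Lemma~\ref{PT3} can be invoked in its usual two-exponent shape with $\tau=\log\alpha/\log|\beta|$. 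Your version is arguably the more robust one, since $\log\alpha/\log|\beta|=-1$ is rational, which makes the paper's continued-fraction computation for $\tau$ delicate at best. Moreover, your first step is stronger than you realize: because $5^{-1/2}\alpha^{j}$ is bounded away from $1$ uniformly over $j\in\mathbb{Z}$ (the minimum of $\left|5^{-1/2}\alpha^{j}-1\right|$ is about $0.17$, attained at $j=2$), the inequality $\left|5^{-1/2}\alpha^{k-m-n}-1\right|\le C\alpha^{-2m}$ admits \emph{no} value of $k-m-n$ once $m\ge 4$; it pins $m$ below an explicit single-digit bound outright, with no need for Matveev or any reduction lemma at that stage. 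The one claim you should repair is that the second form ``genuinely contains two independent integer variables'': after $m$ is fixed it reduces to $\left|(k-n)\log\alpha-\log\left(\sqrt{5}\,L_m\right)\right|<C'\alpha^{-2n}$, which has the single integer unknown $k-n$, so Lemma~\ref{PT3} in its stated two-variable form is again not literally applicable. This does not sink the argument --- for each of the finitely many $m$ the number $\log\left(\sqrt{5}\,L_m\right)/\log\alpha$ is a fixed irrational whose distance to the nearest integer is a computable positive constant, which caps $n$ by a small explicit number --- but the correct continuation is this elementary nearest-integer estimate rather than the Dujella--Peth\"{o} reduction you name. With that substitution your route closes, and in fact more quickly and more cleanly than the paper's.
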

\begin{proof}

Here, to reduce the size of the paper, we will only share a detailed proof for Equation \eqref{M1}, neglecting that of Equation \eqref{M2}.

From Equation \eqref{I2} and Lemma \ref{PT4}, we can write 
\begin{equation*}
{\alpha ^{k - 2}} \leqslant {F_k} = {L_n}{L_m} \leqslant {\left| \beta  \right|^{ - n - m - 2}}
\end{equation*}
and naturally
\begin{equation*}
\left( {k - 2} \right)\log \alpha  \leqslant  - \left( {n + m + 2} \right)\log \left| \beta  \right| \Rightarrow k \leqslant 2 - \left( {n + m + 2} \right)\frac{{\log \left| \beta  \right|}}{{\log \alpha }} \Rightarrow k < 4n.
\end{equation*}
Considering the Binet's formulas in Equation \eqref{I1}, we can arrange Equation \eqref{I2} as follows:
\begin{equation}\label{M3}
{\Lambda _1}: = \left| {{\alpha ^{ - k}}{{\left| \beta  \right|}^{n + m}}\sqrt 5  - 1} \right| < \frac{8}{{{\alpha ^{2m}}}}.
\end{equation}
In this case, we can consider the case $s = 3$, ${\eta _1} = \alpha$, ${\eta _2} = \left| \beta  \right|$, ${\eta _3} = \sqrt{5}$, ${b_1} = -k$,  ${b_2} = n + m$, and ${b_3} = 1$ in Theorem \ref{PT1}. To be clear, ${\eta _1},{\eta _2},{\eta _3} \in \mathbb{Q}\left( {\sqrt 5 } \right)$ and $\mathcal{F} = \mathbb{Q}\left( {\sqrt 5 } \right)$ of degree ${D} = 2$. Here, since $ {\alpha ^k} \cdot {\left| \beta  \right| ^{ - n - m}} = \sqrt 5 $ is not satisfied when computing the square of its both sides, ${\Lambda _1} \ne 0$. In addition,   
\begin{equation*}
h\left( {{\eta _1}} \right) =\frac{1}{2}\log \alpha ,\,\,h\left( {{\eta _2}} \right) = \frac{1}{2}\log \alpha,\,\, h\left( {{\eta _3}} \right) = \log \sqrt{5},\,\,{A_1} = \log \alpha,\,\, {A_2} = \log \alpha, \,\, \text{and}\,\, {A_3} = 2\log \sqrt{5}.
\end{equation*}
Further, for $B = 4n$, $B \geqslant \max \left\{ {\left| { - k} \right|,n + m,1} \right\}$. Then, with these values, Theorem \ref{PT1} implies that
\begin{equation}\label{M4}
\log ({\Lambda _1}) >  - 3.62 \times {10^{11}}\times \left( {1 + \log{4n}} \right).
\end{equation}
Also, with Equation \eqref{M3}, we obtain
\begin{equation}\label{M5}
\log ({\Lambda _1}) < \log 8 - 2m\log \alpha. 
\end{equation}
As a result, we get
\begin{equation}\label{M6}
m < 3.77 \times {10^{11}}\times\left( {1 + \log 4n} \right).
\end{equation}

Further, coming back to Equation \eqref{I2}, after some mathematical arrangements, we can write
\begin{equation}\label{M7}
{\Lambda _2} : = \left| {{\alpha ^{ - k}}{\left| \beta  \right| ^n}\left( {\sqrt 5 {L_m}} \right) - 1} \right| < {\frac{{33}}{{{\alpha ^n}}}},
\end{equation}
which implies that $s = 3$, ${\eta _1} = \alpha$, ${\eta _2} = \left| \beta  \right|$, ${\eta _3} = \sqrt 5 {L_m}$, ${b_1} = -k$, ${b_2} = n$, and ${b_3} = 1$. Here, ${\eta _1},{\eta _2},{\eta _3} \in \mathbb{Q}\left( {\sqrt 5 } \right)$ and $\mathcal{F} = \mathbb{Q}\left( {\sqrt 5 } \right)$ of degree ${D} = 2$. On the other hand, one can prove that ${\Lambda _2} \ne 0$ by applying the same procedure above. As a result, 
\begin{equation*}
h\left( {{\eta _1}} \right) =\frac{1}{2}\log \alpha ,\,\,h\left( {{\eta _2}} \right) = \frac{1}{2}\log \alpha,\,\,{A_1} = \log \alpha,\,\, \text{and}\,\, {A_2} = \log \alpha.
\end{equation*}
Further, since $\eta _3$ is the root of the polynomial ${x^2} - 5L_m^2$, $h\left( {{\eta _3}} \right) = \log \left( {\sqrt 5 {L_m}} \right)$ and ${A_3} = 6m\log \alpha $. In addition, $B \geqslant \max \left\{ {\left| { - k} \right|,n,1} \right\}$ for $B = 4n$. From Theorem \ref{PT1}, we can write
\begin{equation}\label{M8}
\log ({\Lambda _2}) >  - 6.49 \times {10^{11}}\times m \times\left( {1 + \log 4n} \right). 
\end{equation}
Solving Equations \eqref{M6} and \eqref{M8} together, we get
\begin{equation}\label{M9}
\log ({\Lambda _2}) >  - 2.45 \times {10^{23}}{\left( {1 + \log 4n} \right)^2}.	
\end{equation}
Also, from Equation \eqref{M7}, we obtain
\begin{equation}\label{M10}
\log ({\Lambda _2}) < \log 33 - n\log \alpha.
\end{equation}
Considering Equations \eqref{M9} and \eqref{M10}, we find  
\begin{equation}\label{M11}
n < 2.18 \times {10^{27}}.
\end{equation}

After applying a similar process into Equation \eqref{I3}, we determine the bounds
\begin{equation}\label{M12}
m < 7.52 \times {10^{11}}\times\left( {1 + \log 4n} \right)\,\,\text{and}\,\,n < 2.25 \times {10^{27}}.
\end{equation}
Summing up, it is sufficient that we consider the following lemma in order to complete the proof.

\begin{lemma}\label{T2}
Both Equations \eqref{I2} and \eqref{I3} are satisfied for all the ordered triples of $\left( {k,m,n} \right)$ over the ranges $k < 4n$, $1 \leqslant m \leqslant n$, and $n < 2.25 \times {10^{27}}$.
\end{lemma}

According to Lemma \ref{T2}, there is a finite number of solutions. But, the bounds are very huge, and thereby, we must obtain a more favorable condition. To do this, we will use Dujella-Pethö lemma for four different cases.

\textbf{Case I:} Introducing the notation 
\begin{equation*}
{\Gamma _1}: =  - k\log \alpha  + \left( {n + m} \right)\log \left| \beta  \right| + \log \sqrt 5.
\end{equation*}
we can write
\begin{equation*}
{\Lambda _1}: = \left| {\exp \left( {{\Gamma _1}} \right) - 1} \right| < \frac{8}{{{\alpha ^{2m}}}}.
\end{equation*}
From Lemma \ref{PT5}, we obtain
\begin{equation*}
0 < \left| {k\frac{{\log \alpha }}{{\log \left| \beta  \right|}} - \left( {n + m} \right) + \frac{{\log \left( {1/\sqrt 5 } \right)}}{{\log \left| \beta  \right|}}} \right| < \frac{{34}}{{{\alpha ^{2m}}}}
\end{equation*}
When applying Dujella-Pethö lemma into the last inequality by considering $M = 9.1 \times {10^{27}}$ $\left( M > 4n > k \right)$ and $\tau  = \frac{{\log \alpha }}{{\log \left| \beta  \right|}}$, computing the continued fraction expansions of $\tau$ yields
\begin{equation*}
\frac{{{p_{47}}}}{{{q_{47}}}} = \frac{{13949911361108065346183311454}}{{{92134223612043233793615516979}}}.
\end{equation*}
This means that $6M < {q_{47}} = 92134223612043233793615516979$. As a result, we obtain
\begin{equation*}
\varepsilon : = \left\| {\mu {q_{47}}} \right\| - M\left\| {\tau {q_{47}}} \right\|,\,\, \varepsilon  > 0.486
\end{equation*}
In this case, for the case where $A: = 34$, $B: = \alpha^2 $, and $k: = m$ in Lemma \ref{PT3}, we conclude that $m \leqslant 73$.

\textbf{Case II:} Assume that $5 < m \leqslant 107$. Considering
\begin{equation*}
{\Gamma _2}: =  - k\log \alpha  + n\log \left| \beta  \right| - \log \left( {\frac{1}{{\sqrt 5 {L_m}}}} \right),
\end{equation*}
we have
\begin{equation*}
{\Lambda _2}: = \left| {\exp \left( {{\Gamma _2}} \right) - 1} \right| < \frac{{33}}{{{\alpha ^n}}}
\end{equation*}
and from Lemma \ref{PT5}, we can write
\begin{equation*}
0 < \left| {k\frac{{\log \alpha }}{{\log \left| \beta  \right|}} - n + \frac{{\log \left( {1/\left( \sqrt 5 {L_m}\right) } \right)}}{{\log \left| \beta  \right|}}} \right| < \frac{{138}}{{{\alpha ^n}}}.
\end{equation*}
For the case where $M = 9.1 \times {10^{27}}$ $\left( {M > 4n > k} \right)$ and $\tau  = \frac{{\log \alpha }}{{\log \left| \beta  \right|}}$, computing the continued fraction expansions of $\tau$ gives 
\begin{equation*}
\frac{{{p_{47}}}}{{{q_{47}}}} = \frac{{13949911361108065346183311454}}{{{92134223612043233793615516979}}}
\end{equation*}
and, thus, $6M < {q_{47}} = 92134223612043233793615516979$. In this case, 
\begin{equation*}
{\varepsilon _m}: = \left\| {{\mu _m}{q_{47}}} \right\| - M\left\| {\tau {q_{47}}} \right\|,\,\,{\varepsilon _m} > 0.034, \,\,{\mu _m} = \frac{{\log \left( {1/\sqrt 5 {L_m}} \right)}}{{\log \left| \beta  \right|}}.
\end{equation*}
As a result, taking $A: = 138$, $B: = \alpha $, and $k: = n$ in Lemma \ref{PT3}, we obtain that $n \leqslant 160$.

It should be noted that applying a similar investigation into Equation \eqref{I3}, we obtain the bounds in which $m \leqslant 75$ and $n \leqslant 153$. Then, we can compose a unique looping in Mathematica$^\copyright$ over the range $m \leqslant 75$ and $n \leqslant 160$ to determine all possible solutions to both Equations \eqref{I2} and \eqref{I3}. So, running our Pc algorithm validates Theorem \ref{T1}. This exhausts the proof.

\end{proof}

A simple observation of the outcomes of Theorem \ref{T1} reveals the following inferences.

\begin{corollary}\label{T3}
All common terms of the Fibonacci and Lucas numbers are 1 and 3.
\end{corollary}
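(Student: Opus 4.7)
The corollary falls out of Theorem \ref{T1} by inspection, so the plan is one of extraction rather than fresh argument. A common term is a positive integer $N$ satisfying $N = F_k = L_n$ for some $k, n \geqslant 1$ (using the same positive-index convention as in Theorem \ref{T1}). The key observation is that, because $L_1 = 1$, we can rewrite this identity as $F_k = L_1 \cdot L_n$, so the triple $(k,1,n)$ is a solution of Equation \eqref{I2} with $m = 1 \leqslant n$. Hence every common term must arise from a triple in \eqref{M1} whose middle entry equals $1$.

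Reading off \eqref{M1}, the qualifying triples are $(1,1,1)$, $(2,1,1)$, and $(4,1,2)$, which yield $F_1 = L_1 = 1$, $F_2 = L_1 = 1$, and $F_4 = L_2 = 3$. Therefore the set of common values is exactly $\{1,3\}$. As a sanity check I would repeat the symmetric extraction from \eqref{M2}: since $F_1 = 1$, any common term written as $L_k = F_n$ gives a solution $(k,1,n)$ of Equation \eqref{I3}, and the triples $(1,1,1)$, $(1,1,2)$, $(2,1,4)$ with $m = 1$ in \eqref{M2} recover the same two values $1$ and $3$, confirming the list.

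There is no genuine obstacle here once Theorem \ref{T1} is in hand; the only subtlety worth flagging is the positive-index convention adopted in Theorem \ref{T1}, which tacitly excludes the borderline coincidence $F_3 = 2 = L_0$ from the list of common terms. With this convention fixed, the corollary is complete.
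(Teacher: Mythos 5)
Your proposal is correct and follows essentially the same route as the paper: the paper likewise observes that setting $m=1$ (so $L_m=1$) reduces Equation \eqref{I2} to $F_k=L_n$ and then reads the common values off the solution list \eqref{M1} in Theorem \ref{T1}. Your extra cross-check against \eqref{M2} and the remark about the positive-index convention excluding $F_3=2=L_0$ are sensible additions but do not change the argument.
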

\begin{proof}
For the case where $m=1$, Equation \eqref{I2} is reduced to $F_k=L_n$. In this case, the result follows from Theorem \ref{T1}.
\end{proof}

\begin{corollary}\label{T4}
The only Lucas-square Fibonacci numbers are $F_1=L_1^2=1$ and $F_2=L_1^2=1$. 
\end{corollary}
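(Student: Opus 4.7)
The plan is to reduce this corollary immediately to a restriction of Theorem \ref{T1}. A Lucas-square Fibonacci number is, by definition, a Fibonacci number $F_k$ that satisfies $F_k = L_m^2$ for some positive integer $m$. But $L_m^2 = L_m \cdot L_m$, so this is precisely Equation \eqref{I2} specialized to the diagonal case $n = m$. Hence every Lucas-square Fibonacci number corresponds to a triple $(k,m,n)$ solving \eqref{I2} with the extra constraint $m = n$.

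Next I would simply inspect the complete list of solutions provided by Theorem \ref{T1}, namely
\begin{equation*}
(k,m,n) \in \left\{(1,1,1),\,(2,1,1),\,(4,1,2),\,(8,2,4)\right\},
\end{equation*}
and pick out those with $m = n$. Only the first two triples, $(1,1,1)$ and $(2,1,1)$, satisfy this; the remaining triples $(4,1,2)$ and $(8,2,4)$ have $m \ne n$ and therefore are products of two distinct Lucas numbers, not squares. Reading off the values, the two surviving triples yield exactly $F_1 = L_1^2 = 1$ and $F_2 = L_1^2 = 1$, which is the claimed list.

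There is essentially no obstacle here: all the heavy lifting (the Matveev bound, the Dujella--Peth\"o reduction, and the final computer search) was carried out in the proof of Theorem \ref{T1}. The corollary is a one-line consequence obtained by intersecting its solution set with the hyperplane $\{m = n\}$, so the only thing to be careful about is to confirm that the problem of Lucas-square Fibonacci numbers has indeed been cast as the $m = n$ slice of \eqref{I2} and that no admissible triple has been overlooked in the intersection.
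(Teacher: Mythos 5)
Your proposal is correct and follows exactly the paper's own argument: restrict Equation \eqref{I2} to the diagonal $m=n$ and read off the triples $(1,1,1)$ and $(2,1,1)$ from the solution list of Theorem \ref{T1}. No further comment is needed.
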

\begin{proof}
When $m=n$, Equation \eqref{I2} is reduced to $F_k=L_n^2$. From Theorem \ref{T1}, the result can be drawn.
\end{proof}

\begin{corollary}\label{T5}
The only Fibonacci-square Lucas numbers are $L_1=F_1^2=1$, $L_1=F_2^2=1$, and $L_3=F_3^2=4$. 
\end{corollary}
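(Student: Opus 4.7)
The plan is to derive this corollary as a direct specialization of the second part of Theorem \ref{T1}, just as Corollaries \ref{T3} and \ref{T4} were obtained from the first part. A Fibonacci-square Lucas number is, by definition, a Lucas number of the form $L_k = F_m^2$, which is precisely Equation \eqref{I3} under the extra constraint $m = n$. So once Theorem \ref{T1} is in hand, no further linear-forms-in-logarithms machinery is needed.

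Concretely, I would inspect the list
\begin{equation*}
\left(k,m,n\right)\in\left\{\left(1,1,1\right),\left(1,1,2\right),\left(1,2,2\right),\left(2,1,4\right),\left(2,2,4\right),\left(3,3,3\right)\right\}
\end{equation*}
from \eqref{M2} and keep only those triples for which the second and third entries coincide. This leaves $\left(1,1,1\right)$, $\left(1,2,2\right)$, and $\left(3,3,3\right)$, corresponding respectively to $L_1=F_1^2=1$, $L_1=F_2^2=1$, and $L_3=F_3^2=4$. These are exactly the three identities asserted in the corollary, and no other triple from \eqref{M2} has $m=n$, so the list is complete.

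Since this reduction is a matter of reading off a finite list, there is really no obstacle; the whole difficulty has already been absorbed into the proof of Theorem \ref{T1}. The only thing worth noting explicitly in the write-up is that the symmetry convention $1 \leqslant m \leqslant n$ used throughout the paper does not lose any solutions here, because the condition $m = n$ is itself symmetric. Hence the proof can be stated in a single sentence, mirroring the style of Corollaries \ref{T3} and \ref{T4}.
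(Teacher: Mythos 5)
Your proposal is correct and matches the paper's own argument, which simply sets $m=n$ in Equation \eqref{I3} and reads off the triples $\left(1,1,1\right)$, $\left(1,2,2\right)$, and $\left(3,3,3\right)$ from the list \eqref{M2} in Theorem \ref{T1}. Your extra remark about the convention $1\leqslant m\leqslant n$ not losing solutions is harmless and accurate.
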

\begin{proof}
Taking $m=n$ in Equation \eqref{I3} into account, the proof is easily obtained.
\end{proof}

\end{document}